\newtheorem{theorem}{Theorem}[section]
\newtheorem{corollary}[theorem]{Corollary}
\newtheorem{proposition}[theorem]{Proposition}
\newtheorem{lemma}[theorem]{Lemma}
\theoremstyle{definition}
\newtheorem{example}[theorem]{Example}
\theoremstyle{remark}
\newtheorem{remark}[theorem]{Remark}
\numberwithin{equation}{section}
\title{Counting finite $O$-sequences of a given multiplicity}
\author[F.~Cioffi]{Francesca Cioffi}
\address{Dip.~di Matematica e Appl. \\ Universit\`a degli Studi di Napoli Federico II\\ Via Cintia \\ 80126 Napoli \\ Italy.}
\email{\href{mailto:cioffifr@unina.it}{cioffifr@unina.it}}
\author[M.~Guida]{Margherita Guida}
\address{Dip.~di Matematica e Appl. \\ Universit\`a degli Studi di Napoli Federico II\\ Via Cintia \\ 80126 Napoli \\ Italy.}
\email{\href{mailto:maguida@unina.it}{maguida@unina.it}}
\subjclass[2020]{05A15, 05A16, 11B83, 13P10}
\keywords{O-sequence, lex-segment ideal, decomposition of order ideals}
\begin{document}

\begin{abstract}We study the number $O_d$ of finite $O$-sequences of a given multiplicity $d$, with particular attention to the computation of $O_d$. We show that the sequence $(O_d)_d$ is sub-Fibonacci, and that if the sequence $(O_d / O_{d-1})_d$ converges, its limit is bounded above by the golden ratio. This analysis also produces an elementary method for computing $O_d$. In addition, we derive an iterative formula for $O_d$ by exploiting a decomposition of lex-segment ideals introduced by S.~Linusson in a previous work.
\end{abstract}

\maketitle

\section*{Introduction}

Let $O_d$ denote the number of all finite $O$-sequences of a given multiplicity $d$. Finite $O$-sequences correspond to the $h$-vectors of Cohen–Macaulay quotient rings of polynomial graded algebras with respect to the standard grading. As such, the sequence~$(O_d)_d$ has drawn the interest of several authors.

We recall that in~\cite{RR}, L.~Roberts posed the question if the sequence $(O_d/O_{d-1})_d$ converges to a number strictly greater than $1$ as $d$ increases. In~\cite{SL}, an interesting iterative formula for computing the number of finite $O$-sequences under specific conditions, other than the multiplicity, is presented (see \cite[Formula (3)]{SL}), along with some asymptotic estimates. In~\cite[Theorem 2.4]{ES}, it is shown that $(O_d)_d$ is bounded above by the Fibonacci sequence, and a lower bound is provided in terms of integer partitions. In~\cite{SZ}, both upper and lower bounds for $O_d$ are significantly improved. A natural combinatorial description of finite $O$-sequences via suitably defined connected graphs is given in~\cite{CLM}. Related problems, such as the enumeration of stable and strongly stable ideals under certain constraints, are studied, for example, in~\cite{Ceria} (see also the references therein).

In this paper, we show that the sequence $(O_d)_d$ is sub-Fibonacci, according to an extension of the definition given by P.~C. Fishburn and F.~S. Roberts in \cite{FR}, and observe that if the sequence $(O_d/O_{d-1})_d$ converges, then its limit must be a real number $b$ such that $1 \leq b \leq \frac{1+\sqrt{5}}{2}$, i.e.~the golden ratio, which is also the limit of the ratio of the Fibonacci sequence. This analysis also produces an elementary method for computing~$O_d$.  Moreover, by revisiting the insightful decomposition of the sous-escalier of lex-segment ideals introduced by S. Linusson in~\cite{SL}, originally used to obtain the already quoted  \cite[Formula (3)]{SL}, we derive an iterative formula for $O_d$.

\section{Preliminaries}
\label{sec:prelim}

Let $R:=K[x_1,\dots,x_p]$ be the polynomial ring over a field $K$ with the variables ordered as $x_1<\dots<x_p$. A term of $R$ is a power product $x^\alpha=x_1^{\alpha_1}\dots x_p^{\alpha_p}$, with $\alpha_i\in \mathbb Z_{\geq 0}$ for every $i\in\{1,\dots,p\}$. A monomial ideal $J$ of $R$ is an ideal which admits a set of generators made of terms. 

An {\em order ideal} is a set of terms closed under division. Equivalently, it is the {\em sous-escalier} $\mathcal N(J)$ of a monomial ideal $J$, i.e.~the set of the terms outside~$J$. 

A monomial ideal $J$ is a {\em lex-segment ideal} if, for every degree $t$ and for every term $\tau \in \mathcal N(J)$ of degree $t$, if $\sigma$ is a term of degree $t$ lower than $\tau$ with respect to the lexicographic term order, then $\sigma$ belongs to~$\mathcal N(J)$.  

For a homogeneous ideal $I\subset R$, the Hilbert function of the $K$-graded algebra $R/I$ is the function $H_{R/I}: t\in \mathbb Z_{\geq 0}\to \dim(R_t) - \dim(I_t) \in \mathbb Z_{\geq 0}$. Equivalently, we may say that $H_{R/I}$ is the Hilbert function of $I$.

A numerical function $H=(a_0,a_1,\dots,a_t,\dots)$ which is the Hilbert function of a $K$-algebra $R/I$ is also called {\em an admissible function or an $O$-sequence}.

Recall that, given two positive integers $a$ and $t$, the {\em binomial expansion of $a$ in base $t$} is the unique writing
$$a  =  \binom{k(t)}{t} + \binom{k(t-1)}{t-1} + \dots + \binom{k(j)}{j} =: a_t,
$$
where $k(t)> k(t-1)>\dots > k(j)\geq j\geq 1$, and with the convention that a binomial coefficient $\binom{n}{m}$ is null whenever either $n<m$ or $m<0$ 
and $\binom{n}{0}=1$, for all $n\geq 0$. Let 
$$
a_t^{\langle t\rangle}:=\binom{k(t)+1}{t+1} + \binom{k(t-1)+1}{t} + \dots + \binom{k(j)+1}{j+1}.
$$
It is well-known that, if $a_t$ is the value assumed by a Hilbert function at a degree~$t$, then $a_t^{\langle t\rangle}$ is the maximum value that this Hilbert function can assume at degree~$t +1$, i.e.
\begin{equation}\label{eq:condition}
a_{t+1} \leq  a_t^{\langle t\rangle}.
\end{equation} 
As independently studied in \cite{Ma} and \cite{H66}, this value depends on the growth of the lex-segment ideals and characterize the Hilbert functions. Indeed, for every Hilbert function $H=(a_0,a_1,\dots)$ there is a unique lex-segment ideal $L\subseteq K[x_1,\dots,x_{a_1}]$ such that $H$ is the Hilbert function of $K[x_1,\dots,x_{a_1}]/L$. Hence, there is a bijective correspondence between the set of Hilbert functions and the set of lex-segment ideals.

Here, we will focus on finite $O$-sequences $H=(a_0,a_1,\dots,a_s)$, $a_s\not=0$, which are the Hilbert functions of the Artinian quotients $R/I$. The integer $s$ is called the {\em socle degree} of $H$ and $\sum a_i$ is the {\em multiplicity or length} of $H$. 

From now, for every positive integer $d$, the symbol $O_d$ denotes the number of all finite $O$-sequences of multiplicity $d$.
The symbol $A_d$ denotes the number of all finite $O$-sequences of multiplicity $d$ such that the last non-zero value is strictly greater than~$1$.

\section{Relations between $O_d$ and the Fibonacci sequence}

A non-decreasing integer sequence $(x_k)_k$, for which $x_1=x_2=1$, is {\em sub-Fibonacci} if $x_k\leq x_{k-1}+x_{k-2}$ for all $k\geq 3$ (see \cite[page 262]{FR} for finite sequences).

\begin{lemma}\label{lemma:prima}
$O_d=O_{d-1}+A_d$.
\end{lemma}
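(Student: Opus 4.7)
The plan is to partition the $O_d$ finite $O$-sequences of multiplicity $d$ according to whether their last non-zero value equals $1$ or is at least $2$. The second class is by definition counted by $A_d$, so it suffices to exhibit a bijection between the first class and the set of all finite $O$-sequences of multiplicity $d-1$ (with the convention $O_0=1$ for the empty sequence when $d=1$).

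For this bijection I would send $H=(a_0,\dots,a_s)$ with $a_s=1$ to the truncation $(a_0,\dots,a_{s-1})$. The only point to verify is that $a_{s-1}>0$, which follows from~\eqref{eq:condition}: were $a_{s-1}=0$, the growth bound would force $a_s\leq 0^{\langle s-1\rangle}=0$, contradicting $a_s=1$. Conversely, given any finite $O$-sequence $(b_0,\dots,b_r)$ of multiplicity $d-1$, appending a $1$ yields a valid $O$-sequence, since $1\leq b_r^{\langle r\rangle}$ is automatic from $b_r\geq 1$. These two maps are mutually inverse, giving $O_d-A_d=O_{d-1}$.

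For the inequality $O_d<2^d$ the plan is to show $A_d\leq O_{d-1}$ and then invoke induction. I would define a map on the sequences counted by $A_d$ by decrementing the last entry, $(a_0,\dots,a_{s-1},a_s)\mapsto(a_0,\dots,a_{s-1},a_s-1)$. Since $a_s\geq 2$, the new last entry is $\geq 1$, and the Macaulay bound $a_s-1\leq a_{s-1}^{\langle s-1\rangle}$ descends from $a_s\leq a_{s-1}^{\langle s-1\rangle}$; hence the image is a finite $O$-sequence of multiplicity $d-1$. Injectivity is immediate, as the original sequence is recovered by incrementing the last entry. Combined with the first part this yields $O_d\leq 2O_{d-1}$, and induction on $d$ with base case $O_1=1<2$ closes the argument.

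I do not foresee a serious obstacle: both the truncation and the decrement are the natural maps, and all the checks reduce to one-line applications of~\eqref{eq:condition}. The only care required is in handling the degenerate case $d=1$ in the bijection, which is resolved by the convention $O_0=1$.
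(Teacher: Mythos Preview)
Your proposal is correct and follows essentially the same approach as the paper's proof: both partition the $O$-sequences of multiplicity $d$ by whether the last non-zero value is $1$ or at least $2$, use the append/truncate-a-$1$ bijection for the first class, and establish the inequality by observing that the decrement (resp.\ increment) map shows $A_d\leq O_{d-1}$ and then inducting. The only cosmetic difference is that the paper asserts the strict inequality $A_d<O_{d-1}$, whereas you prove $A_d\leq O_{d-1}$; your version still yields $O_d<2^d$ because the strictness is carried by the base case $O_1=1<2$.
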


\begin{proof}
We observe that, for every $O$-sequence $(a_0,a_1,\dots,a_s)$ of multiplicity $d-1$, the sequence $(a_0,a_1,\dots,a_s,1)$ is one of the $O$-sequences of multiplicity $d$ with last non-null value equal to $1$ and, if $1\leq a_s < a_{s-1}^{\langle s-1\rangle}$, then $(a_0,a_1,\dots,a_s+1)$ is one of the $A_d$ $O$-sequences of multiplicity~$d$ with last non-null value strictly greater than $1$. The vice versa also holds. 
\end{proof}

\begin{lemma}\label{lemma: Ad}
$A_{d-2}\leq A_d \leq A_{d-1}+A_{d-2}=O_{d-1}-O_{d-3}$, for every $d\geq 4$, where the second inequality is strict for every $d\geq 7$.
\end{lemma}

\begin{proof}
The first inequality follows from the observation that, given an $O$-sequence $(a_0,a_1,\dots,a_s)$ of multiplicity $d-2$ with $a_s>1$, then $(a_0,a_1,\dots,a_s,2)$ is an $O$-sequences of multiplicity $d$ with last non-zero value greater than $1$.

For the second inequality, first observe that $ A_{d-1}+A_{d-2}=O_{d-1}-O_{d-3}$ by Lemma~\ref{lemma:prima}. Then, considering the construction described in the proof of Lemma \ref{lemma:prima}, note that there are exactly $O_{d-3}$ $O$-sequences of multiplicity $d-1$ of type $(a_0,a_1,\dots,1,1)$ that cannot give rise to $O$-sequences of multiplicity $d$ with last non-zero value strictly greater than $1$. Hence, $A_d\leq O_{d-1}-O_{d-3}$. Moreover, for every $d\geq 7$, there is also at least the $O$-sequence of multiplicity $d-1$ which ends with a number $>1$, of type $(1,2,2,2\dots)$ or $(1,3,2,\dots)$, which cannot give rise to $O$-sequences of multiplicity $d$ with last non-zero value strictly greater than $1$.
\end{proof}

\begin{proposition}\label{prop: sub-fibonacci}
$O_d \leq O_{d-1}+O_{d-2}$, for every $d\geq 3$.
\end{proposition}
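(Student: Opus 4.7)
The plan is to argue by induction on $d$, letting Lemmas \ref{lemma:prima} and \ref{lemma: Ad} do most of the work. The first move is a reformulation: since Lemma \ref{lemma:prima} gives $O_d=O_{d-1}+A_d$, the desired inequality $O_d\leq O_{d-1}+O_{d-2}$ is equivalent to $A_d\leq O_{d-2}$, and it is this simpler form I would try to establish.

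For the inductive step with $d\geq 4$, I would combine the upper bound $A_d\leq O_{d-1}-O_{d-3}$ coming from Lemma \ref{lemma: Ad} with the inductive hypothesis $O_{d-1}\leq O_{d-2}+O_{d-3}$. The hypothesis rearranges to $O_{d-1}-O_{d-3}\leq O_{d-2}$, so chaining the two inequalities yields $A_d\leq O_{d-2}$ at once, and Lemma \ref{lemma:prima} then gives $O_d=O_{d-1}+A_d\leq O_{d-1}+O_{d-2}$, closing the step.

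The base case $d=3$ I would handle by direct inspection: the multiplicity-$3$ finite $O$-sequences are exactly $(1,1,1)$ and $(1,2)$, so $O_3=2=O_2+O_1$. A separate verification here is genuinely needed, because Lemma \ref{lemma: Ad} in the form $A_d\leq O_{d-1}-O_{d-3}$ rests on the bijection between multiplicity-$(d-1)$ $O$-sequences ending in $1,1$ and $O$-sequences of multiplicity $d-3$, a bijection that behaves as stated only for $d\geq 4$; at $d=3$ the sequence $(1,1)$ can in fact be extended by incrementing its last entry to $(1,2)$, so the counting argument underlying Lemma \ref{lemma: Ad} does not apply.

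I do not expect a serious obstacle in the inductive pass itself—it amounts to a one-line telescoping once the reformulation $A_d\leq O_{d-2}$ is identified. The only point requiring attention is precisely this base case and, relatedly, confirming that Lemma \ref{lemma: Ad} is invoked strictly in the range $d\geq 4$ where its hypotheses on $O_{d-3}$ are unambiguous.
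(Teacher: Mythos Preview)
Your proof is correct and rests on the same two ingredients as the paper's: Lemma~\ref{lemma:prima} to express $O_d$ via the $A_j$, and the bound $A_d\le O_{d-1}-O_{d-3}$ from Lemma~\ref{lemma: Ad}. The only difference is packaging---the paper sums the inequalities $A_j\le O_{j-1}-O_{j-3}$ and telescopes directly, whereas you run the equivalent induction through the reformulation $A_d\le O_{d-2}$; your explicit handling of the base case $d=3$ (and the observation that Lemma~\ref{lemma: Ad} should be invoked only for $d\ge 4$) is in fact more careful than the paper's.
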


\begin{proof}
First observe that $O_3=2=1+1=O_1+O_2$, $O_4=3=2+1=O_3+O_2$, $O_5=5=3+2=O_4+O_3$ and $O_6=8=5+3=O_5+O_4$. For every $d\geq 7$, applying repeatedly Lemma~\ref{lemma:prima}, we obtain $O_d=\sum_{j=1}^d A_j +1=\sum_{j=3}^d A_j+1$, being $A_1=A_2=0$. Then, by Lemma \ref{lemma: Ad} 

$A_d+A_{d-1}+1 \leq O_{d-1}-O_{d-3} + O_{d-2}-O_{d-4}$

$A_d+A_{d-1}+A_{d-2}+1 \leq O_{d-1}-O_{d-3} + O_{d-2}-O_{d-4} + O_{d-3} - O_{d-5}$

$\vdots$

$O_d=\sum_{j=1}^d A_j +1 \leq O_{d-1}+O_{d-2}$.
\end{proof}

\begin{corollary}\label{cor: sub-fibonacci}
The sequence $(O_d)_d$ is sub-Fibonacci.
\end{corollary}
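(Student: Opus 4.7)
The plan is to unpack the definition of \emph{sub-Fibonacci} recalled at the top of the section, check its three components one by one, and show that each component either is already established or follows immediately from what has been proved. Recall that a sub-Fibonacci sequence must (i) satisfy $x_1 = x_2 = 1$, (ii) be non-decreasing, and (iii) satisfy the Fibonacci inequality $x_k \leq x_{k-1} + x_{k-2}$ for $k \geq 3$. Item (iii) is the content of Proposition~\ref{prop: sub-fibonacci}, so the work reduces to verifying (i) and (ii).

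For (i), I would argue by direct inspection. Since any admissible function $H = (a_0, a_1, \dots, a_s)$ comes from a nonzero standard-graded $K$-algebra $R/I$, one has $a_0 = \dim_K(R/I)_0 = 1$. Therefore the unique $O$-sequence of multiplicity~$1$ is $(1)$ and the unique $O$-sequence of multiplicity~$2$ is $(1,1)$ (any sequence of multiplicity $2$ starting with $1$ and with $a_s \neq 0$ must have the single extra unit appear as a second entry equal to $1$). Hence $O_1 = O_2 = 1$.

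For (ii), I would invoke Lemma~\ref{lemma:prima}: the identity $O_d = O_{d-1} + A_d$ together with $A_d \geq 0$ gives $O_d \geq O_{d-1}$ for every $d \geq 2$, so the sequence $(O_d)_d$ is non-decreasing.

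Combining these with Proposition~\ref{prop: sub-fibonacci} finishes the proof. There is no real obstacle here: the corollary is essentially a repackaging of the previous proposition together with the trivial base case and the monotonicity provided by Lemma~\ref{lemma:prima}. The only point that requires a sentence of care is the base case, since the definition insists on $x_1 = x_2 = 1$ rather than merely on the Fibonacci recursion.
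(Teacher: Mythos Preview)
Your proof is correct and follows essentially the same approach as the paper: verify $O_1=O_2=1$, use Lemma~\ref{lemma:prima} to get monotonicity, and invoke Proposition~\ref{prop: sub-fibonacci} for the Fibonacci inequality. The paper simply asserts $O_1=O_2=1$ without the explicit justification you provide, but otherwise the arguments are identical.
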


\begin{proof}
Observe that $O_1=O_2=1$ and $(O_d)_d$ is not decreasing by Lemma~\ref{lemma:prima}. Then the statement follows from Proposition \ref{prop: sub-fibonacci}. 
\end{proof}

Now we highlight some features of the sequence $(O_d/O_{d-1})_d$, applying classical methods already used to study the growth rate of the Fibonacci sequence. 

First we observe that Lemma~\ref{lemma:prima} straightforwardly implies that the sequence $(O_d/O_{d-1})_d$ is decreasing if, and only if, $(A_d/O_{d-1})_d$ is decreasing.

\begin{proposition}\label{prop: rate}
\
\begin{enumerate}
\item \label{limiti} $1< O_d/O_{d-1}\leq 2$ and $O_d\leq 2^{d-2}$, for every $d\geq 3$.

\item \label{equivalenza} $(O_d/O_{d-1})_d$ converges to the real number $b$ if, and only if, the sequence $(A_d/O_{d-1})_d$ converges to the real number $\ell=b-1$.

\item \label{sezione aurea} If $(O_d/O_{d-1})_d$ converges to a real number $b$, then $b\leq \frac{1+\sqrt{5}}{2}$ and, equivalently, $\ell \leq (\frac{1+\sqrt{5}}{2})^{-1}$, where $\ell$ is the limit of $(A_d/O_{d-1})_d$.

\item \label{quoziente} $\displaystyle{\frac{A_d}{O_{d-1}} \leq \frac{A_{d-1}}{O_{d-2}}+\frac{A_{d-2}}{O_{d-3}}}$ and $\displaystyle{\frac{O_d}{O_{d-1}} \leq \frac{O_{d-1}}{O_{d-2}}+\frac{O_{d-2}}{O_{d-3}}}$.
\end{enumerate}
\end{proposition}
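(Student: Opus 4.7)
The plan is to reduce every part to the identity $O_d=O_{d-1}+A_d$ from Lemma~\ref{lemma:prima}, the Fibonacci-style bound $A_d\leq A_{d-1}+A_{d-2}$ from Lemma~\ref{lemma: Ad}, and the sub-Fibonacci inequality $O_d\leq O_{d-1}+O_{d-2}$ from Proposition~\ref{prop: sub-fibonacci}. The four items are essentially independent consequences of these three facts, so I would prove them in the stated order.

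For \eqref{limiti}, I would divide $O_d=O_{d-1}+A_d$ by $O_{d-1}$ to obtain $O_d/O_{d-1}=1+A_d/O_{d-1}$. The lower bound $O_d/O_{d-1}>1$ amounts to checking $A_d>0$ for $d\geq 3$, which is witnessed by the $O$-sequence $(1,2,\dots)$; the upper bound is the strict inequality $A_d<O_{d-1}$ already used in the induction of Lemma~\ref{lemma:prima}. Item \eqref{equivalenza} is then immediate: the same identity $O_d/O_{d-1}=1+A_d/O_{d-1}$ shows that one sequence converges iff the other does, with $b=1+\ell$, and in particular $b>1\iff \ell>0$.

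For \eqref{sezione aurea}, I would divide the sub-Fibonacci inequality by $O_{d-1}$ to get
\[
\frac{O_d}{O_{d-1}}\;\leq\; 1+\frac{O_{d-2}}{O_{d-1}}\;=\;1+\frac{1}{O_{d-1}/O_{d-2}}.
\]
Assuming convergence and passing to the limit yields $b\leq 1+1/b$, i.e.\ $b^2-b-1\leq 0$, whence $b\leq (1+\sqrt{5})/2$. The equivalent bound $\ell\leq ((1+\sqrt{5})/2)^{-1}$ comes from $\ell=b-1$ together with $\varphi-1=1/\varphi$.

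For \eqref{quoziente}, I would divide $A_d\leq A_{d-1}+A_{d-2}$ by $O_{d-1}$ and then use the non-decreasing property of $(O_d)_d$ (Lemma~\ref{lemma:prima}) to replace $O_{d-1}$ by the smaller denominators $O_{d-2}$ and $O_{d-3}$, giving the first inequality. For the second, the same recipe applied to $O_d\leq O_{d-1}+O_{d-2}$ yields $O_d/O_{d-1}\leq 1+O_{d-2}/O_{d-1}$, after which I would bound $1\leq O_{d-1}/O_{d-2}$ and $O_{d-2}/O_{d-1}\leq O_{d-2}/O_{d-3}$ (again by monotonicity) and add. No step looks difficult; the only point requiring mild care is the strict-vs.-non-strict distinction in \eqref{limiti}, which is the main (very mild) obstacle.
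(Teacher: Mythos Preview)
Your proposal is correct and follows essentially the same route as the paper: each item is reduced to the identity $O_d/O_{d-1}=1+A_d/O_{d-1}$ from Lemma~\ref{lemma:prima}, the bound $A_d\leq A_{d-1}+A_{d-2}$ from Lemma~\ref{lemma: Ad}, and the sub-Fibonacci inequality of Proposition~\ref{prop: sub-fibonacci}, with monotonicity of $(O_d)_d$ used to adjust denominators in \eqref{quoziente}. Your treatment is in fact slightly more explicit than the paper's on the strict lower bound in \eqref{limiti} and on the second inequality in \eqref{quoziente}, but the arguments coincide.
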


\begin{proof}
From Lemmas \ref{lemma:prima} and \ref{lemma: Ad}, we obtain   
$O_d/O_{d-1}=\frac{O_{d-1}+A_d}{O_{d-1}}=1+\frac{A_d}{O_{d-1}} < 2$, so item~\eqref{limiti} holds because $A_d\leq O_{d-1}$ and $O_3=2$. 
Item~\eqref{equivalenza} follows from item  \eqref{limiti} and Lemma~\ref{lemma:prima}. 

For item \eqref{sezione aurea}, $O_d/O_{d-1} \leq \frac{O_{d-1}+O_{d-2}}{O_{d-1}}=1+\frac{O_{d-2}}{O_{d-1}}$ thanks to Proposition~\ref{prop: sub-fibonacci}. By the hypothesis $(O_{d-1}/O_{d})_d$ converges to $1/b$, hence $b \leq 1+1/b$ and so $b^2-b-1\leq 0$, that is $b\leq \frac{1+\sqrt{5}}{2}$. Finally, $\ell \leq (\frac{1+\sqrt{5}}{2})^{-1}$ follows because $b=1+\ell$ by item \eqref{equivalenza} and Lemma \ref{lemma:prima}.

For item \eqref{quoziente}, it is enough to observe that by Lemma \ref{lemma: Ad}
$$\frac{A_d}{O_{d-1}} \leq \frac{A_{d-1}+A_{d-2}}{O_{d-1}}=\frac{A_{d-1}}{O_{d-1}}+\frac{A_{d-2}}{O_{d-1}}\leq \frac{A_{d-1}}{O_{d-2}}+\frac{A_{d-2}}{O_{d-3}}$$ 
The second inequality analogously follows from Proposition~\ref{prop: sub-fibonacci}.
\end{proof}

\begin{remark}
Proposition \ref{prop: rate}\eqref{limiti} guarantees that the sequence $(O_d/O_{d-1})_d$ has a convergent subsequence, by the Bolzano-Weiestrass Theorem. 

However, we can refine Proposition \ref{prop: rate}\eqref{limiti}(1) in the following way. If at a step $t$ the sequence $(O_d/O_{d-1})_{d}$ assumes a value $>\frac{1+\sqrt{5}}{2}$, then at step $t+1$ it must assume a value $< \frac{1+\sqrt{5}}{2}$. Indeed, if $O_t/O_{t-1}>\frac{1+\sqrt{5}}{2}$ then $O_{t+1}/O_t \leq 1+O_{t-1}/O_{t}< 1 + \frac{2}{1+\sqrt{5}}=\frac{3+\sqrt{5}}{1+\sqrt{5}}=\frac{1+\sqrt{5}}{2}$ by Proposition~\ref{prop: sub-fibonacci}, according to the result of Proposition \ref{prop: rate}\eqref{limiti}(3) in the hypothesis that the sequence converges. More precisely, we obtain 
\begin{equation}\label{eq:bound} 
O_d\leq \Bigl(\frac{5}{3}\cdot \frac{1+\sqrt{5}}{2}\Bigr)^{\lfloor\frac{d-2}{2}\rfloor}.
\end{equation}
Indeed, from Proposition \ref{prop: sub-fibonacci} the inequalities $O_d \leq O_{d-1}+O_{d-2}\leq O_{d-2}+O_{d-3} + O_{d-2} < 3 O_{d-2}$ follow, because $O_{d-3}<O_{d-2}$. Hence, by Lemma \ref{lemma: Ad} we obtain $A_d\leq O_{d-1}-O_{d-3} \leq O_{d-1}-\frac{1}{3}O_{d-1}=\frac{2}{3} O_{d-1}$ and then
\begin{equation}\label{eq:new bound} 
\frac{O_d}{O_{d-1}}=1+\frac{A_d}{O_{d-1}}\leq 1+\frac{2}{3}=\frac{5}{3}.
\end{equation}
We conclude applying repeatedly this inequality and taking into account the previous observations and that $O_1=O_2=1$.
\end{remark}

\section{An elementary computation of $O$-sequences}

The proofs of Lemmas \ref{lemma:prima} and \ref{lemma: Ad} suggest a very easy way to construct the set of all the finite $O$-sequences of a given multiplicity $d$ and, consequently, to compute~$O_d$. We collect the details of this construction in Algorithm~\ref{algorithm}. An implementation in CoCoA~5 (see \cite{CoCoA}) of this algorithm is available at \url{https://www.dma.unina.it/~cioffi/MaterialeOsequences/OSequences.CoCoA5}.

\begin{algorithm}[!ht]
	\caption{\label{algorithm} Construction of all the finite $O$-sequences of multiplicity $d$ and consequent computation of~$O_d$ and $A_d$, for every $1\leq d\leq D$, with $D\geq 4$ a given positive integer}
	\begin{algorithmic}[1]
		\State $O$Sequences$\left(D\right)$
		\Require a positive integer $D$
		\Ensure the value $O_d$, for every $1\leq d\leq D$ 
		\State $O:=[1,1,2,3]$
		\State $A:=[0,0,1,1]$
		\State $B:=[[[1,2]],[1,3],[]]$
		\State $h_2:=1$, $h_1:=2$, $h=3$
		\For{$d=5$ to $D$}
		\State $B[h]$ vector of the $O$-sequences obtained from:  those in $\mathcal B[h_2]$ {\em attaching} a $2$ to the end, and those in $\mathcal B[h_1]$ {\em increasing by $1$} the last non-null value, if possible
		\State Attach the cardinality of $B[h]$ to the end of the list $A$
		\State Attach the sum $A[d]+O[d-1]$ to the end of the list $O$
		\State $a:=h_2$, $h_2:=h_1$, $h_1:=h$, $h:=a$ 
		\EndFor
		\State return $O$ 
	\end{algorithmic}
\end{algorithm}

\begin{proposition}
Given a positive integer $D\geq 4$, Algorithm~\ref{algorithm} returns the values~$O_d$, for every multiplicity $1\leq d\leq D$. 
\end{proposition}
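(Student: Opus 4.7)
The plan is to prove correctness by induction on $d$, carrying along the invariant that at the start of the iteration for a given $d\geq 5$ in Algorithm~\ref{algorithm}, the list $B[h_2]$ is exactly the set of $O$-sequences of multiplicity $d-2$ whose last non-null value is greater than $1$, the list $B[h_1]$ is the analogous set for multiplicity $d-1$, and the entries $O[j]$ and $A[j]$ already computed agree with $O_j$ and $A_j$ for every $j<d$. The base of the induction is the cases $d\leq 4$, which are verified by direct enumeration and match the initial assignments of $O$, $A$, $B$ and of the pointers $h_2,h_1,h$.

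For the inductive step, the core point is that the proofs of Lemmas~\ref{lemma:prima} and~\ref{lemma: Ad} already provide two explicit bijections that partition the $O$-sequences of multiplicity $d$ with last non-null value greater than $1$ according to whether that value equals $2$ or is at least $3$. The first bijection sends an $O$-sequence $(a_0,\dots,a_s)$ of multiplicity $d-2$ with $a_s>1$ to $(a_0,\dots,a_s,2)$; its inverse truncates the last entry, and one must check that the truncated sequence indeed has last value $>1$, which follows because if $a_{s-1}=1$ then the Macaulay bound~\eqref{eq:condition} forces the successor to be at most $1$. The second bijection sends an $O$-sequence $(a_0,\dots,a_s)$ of multiplicity $d-1$ with $a_s>1$ and $a_s<a_{s-1}^{\langle s-1\rangle}$ to $(a_0,\dots,a_s+1)$; its inverse subtracts $1$ from the last entry, which is always admissible because decrementing preserves the Macaulay growth inequality. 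The two images are disjoint and exhaust all $O$-sequences of multiplicity $d$ with last value $>1$.

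Combining these bijections with the inductive hypothesis, the step that defines $B[h]$ inside the for-loop realizes it as the disjoint union of the two sources, so $B[h]$ equals the set of $O$-sequences of multiplicity $d$ with last value $>1$ and hence $A[d]=A_d$. The subsequent assignment then yields $O[d]=A_d+O_{d-1}=O_d$ by Lemma~\ref{lemma:prima}, and the cyclic rotation of the three pointers $h_2,h_1,h$ re-establishes the invariant for the next iteration. The only delicate point is a bookkeeping one: making the ``if possible'' clause precise so that it selects exactly the preimages of the second bijection, namely those sequences in $B[h_1]$ satisfying $a_s<a_{s-1}^{\langle s-1\rangle}$, and checking that the cyclic rotation of pointers indeed puts $B[h]$ back into the correct slot for the next iteration.
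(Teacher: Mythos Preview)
Your proof is correct and follows essentially the same approach as the paper: both rely on the two constructions suggested by the proofs of Lemmas~\ref{lemma:prima} and~\ref{lemma: Ad} (appending a $2$ to sequences of multiplicity $d-2$, and incrementing the last entry of sequences of multiplicity $d-1$ when the Macaulay bound permits) to generate exactly the $O$-sequences of multiplicity $d$ with last value $>1$. You have formalized this more carefully as a loop-invariant induction and explicitly verified surjectivity and disjointness of the two images, whereas the paper's proof is a more informal line-by-line walkthrough that also notes termination; the mathematical content is the same.
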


\begin{proof}
The algorithm $O$Sequences$\left(D\right)$ deals with a finite number of objects that are described by a finite number of data each, hence it is clear that the procedure terminates when $d=D$. For the correctness, we now analyse the command lines.

Lines 2-5 contain instruction for the assignment of the lists $O$ of the integers $O_d$, $A$ of the integers $A_d$ and $B$ which contains three lists: $B[h_2]$ stores the $O$-sequences of multiplicity $d-2$ with last value greater than $1$, $B[h_1]$ stores the analogous $O$-sequences of multiplicity $d-1$, and $B[h]$ contains the analogous $O$-sequences of multiplicity $d$. The algorithm avoids to store the $O$-sequences with last non-null value equal to $1$.

Into a structure of type \lq\lq for", the instruction on line 7 updates $B[h]$, taking into account the proofs of Lemmas \ref{lemma:prima} and \ref{lemma: Ad}, which show that the $O$-sequences of multiplicity $d$ with last value greater than $1$ can be obtained from the analogous $O$-sequences of multiplicity $d-2$ by attaching a \lq\lq $2$" to the end of each $O$-sequence, and from the analogous $O$-sequences of multiplicity $d-1$, increasing the last non-null value by $1$ if possible, according to the properties of $O$-sequences (see formula~\eqref{eq:condition}). 

On lines 8-9 the value of $A_d$ is assigned and stored in the list $A$ and that of $O_d$ in the list $O$, respectively. Then, the values of the indexes $h_2, h_1, h$ are updated  for a possible new step corresponding to multiplicity $d+1$.
\end{proof}

\begin{remark}
Although the rapid growth of the number of $O$-sequences of multiplicity~$d$ poses challenges to the efficiency of Algorithm~\ref{algorithm}, we have computed the values of~$O_d$ up to multiplicity $d = 60$. This data (available at \url{https://oeis.org/A232476} for $d = 1, \dots, 20$ and in Table~\ref{table} for $d = 21, \dots, 60$) shows that the sequence $(O_d / O_{d-1})_d$ is in fact decreasing, at least for integers $d \in [6, 60] \cap \mathbb{N}$, as expected.
\end{remark}

\section{An iterative formula for $O_d$}
\label{sec:iterative}

For every integer $p>0$, $n\geq 0$, $k\geq 0$, $d>0$, let  $M(p,n,k,d)$ denote  the set of all sous-escaliers of Artinian lex-segment ideals in at most $p$ variables, corresponding to finite $O$-sequences $(a_0,a_1,\dots,a_s)$ of multiplicity~$d$, satisfying the following conditions:

$\bullet$ the socle degree $s$ is at most $n$,

$\bullet$ $a_i=\binom{p-1+i}{i}$ for all $0\leq i\leq k$, and 

$\bullet$ $a_i<\binom{p-1+i}{i}$ for all $i>k$. 
\vskip 1mm
\noindent We denote by $O(p,n,k,d)$ the cardinality of $M(p,n,k,d)$. 

Referring to \cite{SL}, let $M\subseteq K[x_1,\dots,x_p]$ be an order ideal that is the sous-escalier of a lex-segment ideal $J$. Then, we consider the following subsets of~$M$ 
\begin{equation}\label{eq: M}
M_1:=\{ \tau \in M \ : \ x_{p} \not\vert \tau\}, \qquad M_2:=\{\frac{\sigma}{x_{p}} : \sigma\in M\setminus M_1\}.
\end{equation} 

\begin{proposition}\label{prop: decomposizione}
With the notation above, for every $p\geq 2$, $M$ belongs to $M(p,n,k,d)$ if, and only if, $M_1$ belongs to $M(p-1,n,i,d-j)$ and $M_2$ belongs to $M(p,i-1,k-1,j)$.
\end{proposition}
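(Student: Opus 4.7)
The plan is to transport everything to Hilbert functions. Writing $(a_t)$, $(b_t)$, $(c_t)$ for the Hilbert functions of $M$, $M_1$, $M_2$, splitting terms of degree $t$ in $M$ by divisibility by $x_p$ gives the fundamental identity
\[
a_t = b_t + c_{t-1} \qquad (c_{-1}:=0),
\]
and summing yields $d = (d-j)+j$ with $j := \sum_t c_t$ the multiplicity of $M_2$, matching the statement. A short check (using that $x_p$ is the largest variable, so that lex-order is preserved both under restriction to $K[x_1,\dots,x_{p-1}]$ and under prepending $x_p$) shows that $M_1$ is the sous-escalier of a lex-segment in $K[x_1,\dots,x_{p-1}]$ and $M_2$ that of a lex-segment in $K[x_1,\dots,x_p]$; combined with the bijection between Hilbert functions and lex-segment sous-escaliers, this reduces the whole proposition to statements about the numerical sequences.

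The technical engine is a single consequence of the lex-segment property: every degree-$t$ term divisible by $x_p$ is lex-larger than every $x_p$-free degree-$t$ term, so
\[
c_{t-1} > 0 \ \Longrightarrow\ b_t = \binom{p-2+t}{t} \ \text{(maximal).}
\]
For the forward direction, let $i$ be the fullness index of $M_1$ and $j$ the multiplicity of $M_2$. The socle bound for $M_1$ is immediate from $M_1\subseteq M$. For the socle of $M_2$: if $c_i>0$, the engine forces $b_{i+1}$ maximal, contradicting the definition of $i$; since the Hilbert function of an Artinian quotient cannot revive after a zero, $c_t=0$ for all $t\ge i$. For the fullness index of $M_2$: maximality of $a_t$ for $t\le k$ together with the Pascal identity $\binom{p-1+t}{t}=\binom{p-2+t}{t}+\binom{p-2+t}{t-1}$ forces both summands $b_t$ and $c_{t-1}$ to attain their maxima, so $c_s$ is maximal for $s\le k-1$; conversely, $c_k$ maximal would, by the engine, force $b_{k+1}$ and hence $a_{k+1}$ maximal, contradicting the definition of $k$.

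For the converse, given $M_1,M_2$ with the stated parameters, define $a_t := b_t + c_{t-1}$ and let $M$ be the unique lex-segment sous-escalier with this Hilbert function. The compatibility that ensures $M$ decomposes back into $M_1$ and $M_2$ is precisely $b_t=\binom{p-2+t}{t}$ whenever $c_{t-1}>0$, which holds because $c_{t-1}>0$ implies $t\le i$, where $b_t$ is maximal by hypothesis. Multiplicity is $d$ by construction; the socle degree of $M$ is at most $\max(n,i) = n$ (using $i\le \mathrm{socle}(M_1)\le n$); and the fullness index equals $k$ via the Pascal identity for $t\le k$ together with $a_{k+1}<\text{max}$, which follows from $c_k<\text{max}$. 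The main obstacle is the bookkeeping around the fullness index $k-1$ of $M_2$, where the lex-segment engine and the Pascal identity have to be combined cleanly in both directions.
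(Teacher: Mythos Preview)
Your proof is correct and follows the same decomposition that the paper uses; where the paper appeals to \cite[Theorem~2.1]{SL} for the structural facts about $M_1$ and $M_2$, you rederive them explicitly through the identity $a_t=b_t+c_{t-1}$, your lex-segment ``engine'', and Pascal's identity, making the argument self-contained. One small point to tighten in the converse: you invoke ``the unique lex-segment sous-escalier with Hilbert function $(a_t)$'' before knowing that $(a_t)$ is an $O$-sequence, but this follows at once by observing that $M_1\cup x_pM_2$ is itself a finite order ideal with that Hilbert function---your compatibility condition $c_{t-1}>0\Rightarrow b_t$ maximal is precisely what is needed both for closure under division and for the lex-segment property of the union.
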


\begin{proof}
By the definition of lex-segment ideal, $M$ is the sous-escalier of a lex-segment ideal if and only if both $M_1$ and $M_2$ are   sous-escaliers of lex-segment ideals. 
Then, we recall that the proof of \cite[Theorem 2.1]{SL} guarantees that $M_1$ is an order ideal in a number of variables $\leq p-1$ with $O$-sequence having socle degree $\leq n$ and attaining the maximal value up to a suitable integer $i$ such that $k\leq i \leq n$, by construction. Analogously, $M_2$ is the order ideal in a number of variables $\leq p$, with an $O$-sequence of socle degree $\leq i-1$ and attaining the maximal value up to $k-1$, where $i$ is the same integer involved in the description of~$M_1$. 

Moreover, observing that the multiplicity of the $O$-sequence of $M$ is the cardinality of~ $M$, which is the sum of the cardinalities of $M_1$ and $M_2$, by construction, if $M \in M(p,n,k,d)$ then $M_1\in M(p-1,n,i,d-j)$ and $M_2 \in M(p,i-1,k-1,j)$, for a suitable integer $0<j<d$. 

The vice versa is obtained following the inverse constructive procedure.
\end{proof}

\begin{lemma}\label{lemma: base}
\
\begin{enumerate}
\item[(i)] $O_d=O(d,d-1,0,d)$.
\item[(ii)] $O(1,n,k,d)= \left\{\begin{array}{cl}1, &\text{ if } k=d-1 \text{ and } n\geq d-1\\ 0, &\text{ otherwise}\end{array}\right.$.
\item[(iii)] $O(p,n,0,d)=\sum_{k=0}^{d-1} O(p-1,n,k,d)$, for every $p\geq 2$.
\end{enumerate}
\end{lemma}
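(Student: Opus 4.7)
I would prove the three items in order, the first two by direct verification of definitions and the third by a partition argument.

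For part (i), my plan is to check that every finite $O$-sequence of multiplicity $d$ automatically satisfies the defining conditions of $M(d,d-1,0,d)$. Since $a_0=1$ and the $a_i$ are nonnegative integers summing to $d$, the inequality $a_1\leq d-1$ holds, and this is exactly the condition $a_1<\binom{d-1+1}{1}=d$ required for the degree $1$ step with $k=0$; similarly $a_i<\binom{d-1+i}{i}$ holds in higher degree because such a binomial coefficient is already $\geq d$. The socle degree is at most $d-1$ (the extreme case being $(1,1,\dots,1)$), so the bound $s\leq d-1$ is automatic. Via the Macaulay bijection between Hilbert functions and sous-escaliers of lex-segment ideals (stated in the Preliminaries), this gives the equality $O_d=O(d,d-1,0,d)$.

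For part (ii), I would simply enumerate: in one variable, the only sous-escaliers of Artinian lex-segments are $\{1,x_1,\dots,x_1^{s}\}$, whose $O$-sequences are $(1,1,\dots,1)$ with $s+1$ ones. The multiplicity constraint forces $s+1=d$, i.e.~$s=d-1$. The conditions $a_i=\binom{i}{i}=1$ for $i\leq k$ and $a_i<1$ for $i>k$ pin down $k=s=d-1$, while the constraint $s\leq n$ becomes $n\geq d-1$. Outside of this single case, $M(1,n,k,d)$ is empty.

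For part (iii), I plan a partitioning argument. The condition $k=0$ in $M(p,n,0,d)$ asserts $a_1<\binom{p-1+1}{1}=p$, hence $a_1\leq p-1$, and consequently the underlying sous-escalier involves only $x_1,\dots,x_{p-1}$, i.e.~lives in at most $p-1$ variables. For any such $M$, I would set $k^\ast(M)$ to be the largest index such that $a_i=\binom{p-2+i}{i}$ for all $0\leq i\leq k^\ast$; this is well-defined because $a_0=1=\binom{p-2}{0}$ guarantees $k^\ast\geq 0$, and $k^\ast\leq s\leq d-1$. By definition of $k^\ast$, the sous-escalier $M$ lies in $M(p-1,n,k^\ast,d)$, and conversely every element of the right-hand union lies in $M(p,n,0,d)$ since $a_1\leq p-2<p$. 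The map $M\mapsto k^\ast(M)$ therefore partitions $M(p,n,0,d)$ across $k=0,1,\dots,d-1$, yielding the claimed identity.

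None of the three parts is deep; the only mild obstacle is maintaining the distinction between \emph{at most} $p$ variables and \emph{exactly} $p$ variables, and being careful that the threshold index $k$ in the definition of $M(p,n,k,d)$ is computed with respect to the ambient dimension $p$ rather than the number of variables actually used by $M$. Once that distinction is respected, part (iii) is essentially a matter of declaring the right statistic and checking well-definedness.
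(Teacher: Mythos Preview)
Your proof is correct and follows the same route as the paper, which simply asserts that the lemma follows from the definition of $O(p,n,k,d)$; you have supplied the routine verifications the paper omits. One small slip in part~(iii): for $M\in M(p-1,n,k,d)$ with $k\geq 1$ one has $a_1=\binom{p-1}{1}=p-1$, not $a_1\leq p-2$, but the required inequality $a_1<p$ (and more generally $a_i\leq\binom{p-2+i}{i}<\binom{p-1+i}{i}$ for $i\geq 1$) still holds, so the argument goes through unchanged.
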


\begin{proof}
The statement follows from the definition of $O(p,n,k,d)$. 
\end{proof}

\begin{theorem}\label{th: iterative formula}
For every integer $p>1$, $n\geq 0$, $k> 0$, $d>0$, we can compute $O(p,n,k,d)$ by the following formula:
\begin{equation}\label{eq:formula k}
O(p,n,k,d)=\sum_{j=1}^{d-1} \sum_{i=k}^n O(p-1,n,i,d-j)\cdot O(p,i-1,k-1,j).
\end{equation}
\end{theorem}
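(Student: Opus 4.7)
The plan is to count $M(p,n,k,d)$ by partitioning it according to the two auxiliary parameters $i$ and $j$ arising in the decomposition of Proposition~\ref{prop: decomposizione}, and then to apply that proposition to identify each block with a Cartesian product. Concretely, for $M \in M(p,n,k,d)$, let $M_1, M_2$ be as in~\eqref{eq: M}; I set $j:=|M_2|$ and let $i$ be the largest index such that the Hilbert function of $M_1$ equals $\binom{p-2+t}{t}$ for every $t\leq i$. Both are clearly functions of $M$ alone, so the map $M\mapsto (i,j,M_1,M_2)$ is injective.

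Next, I would check that $i$ and $j$ always lie in the stated ranges. Since $k>0$ means the Hilbert function of $M$ attains its maximum through degree at least $1$, the variable $x_p$ belongs to $M$, hence $1 \in M_2$ and $j\geq 1$; moreover $1\in M_1$ always, so $d-j\geq 1$ and $j\leq d-1$. For $i$, the socle degree of $M_1$ is at most that of $M$, giving $i\leq n$; and because for $t\leq k$ every term of degree $t$ lies in $M$, the terms of $M_1$ of degree $t$ are exactly all $\binom{p-2+t}{t}$ monomials in $x_1,\dots,x_{p-1}$ of that degree, forcing $i\geq k$. Combined with Proposition~\ref{prop: decomposizione}, this shows that $M\mapsto (M_1,M_2)$ yields a bijection between $M(p,n,k,d)$ and the disjoint union
$$\bigsqcup_{j=1}^{d-1}\bigsqcup_{i=k}^{n} M(p-1,n,i,d-j)\times M(p,i-1,k-1,j).$$

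Taking cardinalities then produces formula~\eqref{eq:formula k}. The delicate point I expect to handle is exactly the disjointness of this union, i.e.\ that $(i,j)$ is uniquely determined by $M$; but this is ensured by the two explicit recipes above (reading $j$ off as the count of terms of $M$ divisible by $x_p$, and reading $i$ off the Hilbert function of $M_1$), so the substantive combinatorial content is already packaged inside Proposition~\ref{prop: decomposizione}, and the theorem becomes a matter of bookkeeping.
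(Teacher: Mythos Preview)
Your proposal is correct and follows essentially the same approach as the paper: both derive formula~\eqref{eq:formula k} from the bijection of Proposition~\ref{prop: decomposizione}, with your write-up simply making explicit the definitions of $i$ and $j$ and the verification of their ranges that the paper leaves implicit in its one-line appeal to that proposition.
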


\begin{proof}
Considering items $(ii)$ and $(iii)$ of Lemma \ref{lemma: base} as base of induction, the iterative formula~\eqref{eq:formula k} follows from the bijection described in Proposition \ref{prop: decomposizione}.
\end{proof}

\begin{corollary}
Lemma \ref{lemma: base} and Theorem \ref{th: iterative formula} provide an iterative formula to compute~$O_d$, for every $d\geq 2$.
\end{corollary}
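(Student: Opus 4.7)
The plan is to assemble the statements of Lemma \ref{lemma: base} and Theorem \ref{th: iterative formula} into a single well-founded recursion computing $O_d$. First, I would invoke item (i) of Lemma \ref{lemma: base} to reduce the problem to computing $O(d,d-1,0,d)$, and then apply item (iii) (since $k=0$) to rewrite it as $\sum_{k=0}^{d-1} O(d-1,d-1,k,d)$, thereby lowering the number of variables by one.

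The recursive step is then a case analysis on any quantity $O(p,n,k,d)$ arising in the computation: if $p=1$, read off the value from item (ii); if $p>1$ and $k=0$, apply item (iii); and if $p>1$ and $k>0$, expand via the formula of Theorem \ref{th: iterative formula}. Since that formula only involves multiplicities $d-j$ and $j$ with $1\le j\le d-1$, every recursive argument remains within the domain of the quoted results, and the outer sum is empty as soon as $d=1$, which is consistent with the fact that the only $O$-sequence of multiplicity one is $(1)$.

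The essential point, and the only substantive step I foresee, is termination. I would order the pairs $(p,k)$ lexicographically. Each factor $O(p-1,n,i,d-j)$ produced by Theorem \ref{th: iterative formula} has $p$ strictly smaller; each factor $O(p,i-1,k-1,j)$ has $p$ unchanged but $k$ strictly smaller; and each application of Lemma \ref{lemma: base}(iii) strictly decreases $p$. Since item (ii) terminates the recursion as soon as $p=1$ is reached, the procedure halts in finitely many steps and produces $O_d$ as a finite sum of products of positive integers, which is what the statement claims.
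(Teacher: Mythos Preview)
Your proposal is correct. The paper states this corollary without proof, treating it as immediate from Lemma~\ref{lemma: base} and Theorem~\ref{th: iterative formula}; your explicit case analysis and the lexicographic termination argument on $(p,k)$ simply spell out what the paper leaves implicit.
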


\begin{example}
Beyond computing the values of $O_d$, formula~\eqref{eq:formula k} also enables the computation of the cardinality of other sets of monomial ideals. For instance, when $p = 2$, the set of lex-segment ideals of a given multiplicity $d$ coincides with both the set of strongly stable ideals and the set of stable ideals of the same multiplicity (e.g., see~\cite{Ceria}). Therefore, the cardinality of this set is given by $O(3, d - 1, 0, d)$.
\end{example}

An implementation in CoCoA~5 of formula \eqref{eq:formula k} is available at 
 
\hskip 7.5mm\url{https://www.dma.unina.it/~cioffi/MaterialeOsequences/IterativeFormulaOSequences.CoCoA5}.

\section*{Acknowledgements}
The first author is a member of GNSAGA (INdAM, Italy).

\begin{table}[]
\caption{Values of $O_d$ for $d\in\{21,\dots,60\}$.}\label{table}
\begin{tabular}{r| rrrrr}
\hline
$d$ & $21$ & $22$ & $23$ & $24$ & $25$ \\
$O_d$  & $1416$ &$1882$ &$2490$ &$3279$ &$4299$ \\
\hline\hline
$d$ & $26$ & $27$ & $28$ & $29$ & $30$\\
$O_d$ &$5612$ &$7297$ &$9451$ &$12195$ &$15683$  \\
\hline\hline
$d$ & $31$ & $32$ & $33$ & $34$ & $35$  \\
$O_d$ & $20099$ &$25674$ &$32696$ &$41514$ &$5255$ \\
\hline\hline
$d$ & $36$ & $37$ & $38$ & $39$ & $40$ \\
$O_d$ &$66361$ &$83561$ &$104951$ &$131491$ &$164347$ \\
\hline\hline
$d$ & $41$ & $42$ & $43$ & $44$ & $45$\\
$O_d$ & $204936$ &$254979$ & $316552$ & $392166$ & $484853$ \\
\hline\hline
$d$ &$46$ & $47$ & $48$ & $49$ & $50$\\
$O_d$ & $598255$ & $736759$ & $905635$ & $1111194$ & $1360997$\\

\hline\hline
$d$ & $51$ & $52$ & $53$ & $54$ & $55$ \\
$O_d$  &$1664090$ & $2031266$ & $2475404$& $3011853$& $3658861$ \\
\hline\hline
$d$ & $56$ & $57$ & $58$ & $59$ & $60$\\
$O_d$  &$4438118$& $5375378$& $6501163$& $7851624$& $9469536$ \\
\hline
\end{tabular}
\end{table} 


\providecommand{\bysame}{\leavevmode\hbox to3em{\hrulefill}\thinspace}
\providecommand{\MR}{\relax\ifhmode\unskip\space\fi MR }
\providecommand{\MRhref}[2]{%
  \href{http://www.ams.org/mathscinet-getitem?mr=#1}{#2}
}
\providecommand{\href}[2]{#2}

\end{document}